\tikzstyle{angle}=[circle,draw,inner sep=0pt, minimum size=6pt]
\theoremstyle{plain}
\newtheorem{theorem}{Theorem}[section]
\newtheorem{lemma}[theorem]{Lemma}
\theoremstyle{definition}
\theoremstyle{remark}
\newtheorem{remark}[theorem]{Remark}
\numberwithin{equation}{section}
\newcommand{\forget}[1]{}
\begin{document}

\title[Prinipally-INJECTIVITY LEAVITT
PATH ALGEBRAS OVER ARBITRARY GRAPHS]{ Principally-injective Leavitt path algebras over arbitrary graphs}

\author{Soumitra Das and Ardeline M. Buhphang*}

\address{S. Das, Department of Mathematics, North-Eastern Hill University,
	Permanent Campus, Shillong-793022, Meghalaya, India.}
\email{soumitrad330@gmail.com}

\address{A. M. Buhphang, Department of Mathematics, North-Eastern Hill University,
	Permanent Campus, Shillong-793022, Meghalaya, India.}

\email{ardeline17@gmail.com}

\begin{abstract} A ring $R$ is called right {\it principally-injective} if every $R$-homomorphism $f : aR \longrightarrow R$, $a \in R$, extends to $R$, or equivalently, if every system of equations $xa = b \ (a,b \in R)$ is solvable in $R$. In this paper we show that for any arbitrary graph $E$ and for a field $K$, principally-injective conditions for the Leavitt path algebra $L_{K}(E)$ are equivalent to that the graph $E$ being acyclic. We also show that the principally-injective Leavitt path algebras are precisely the von Neumann regular Leavitt path algebras. 
\end{abstract}

\thanks{*Corresponding author}
\subjclass[2010]{16D50, 16D60.}
\keywords{Leavitt path algebras, von Neumann regular rings, Principally-injective rings, Arbitrary graph}

\maketitle

\section{Introduction} \label{S:intro} 

All the rings that we consider here are assumed to be associative with local units (such as the Leavitt path algebras).\\ One of the fascinating direction to study in Leavitt path algebras  is the characterization of the ring-theoretic properties of a Leavitt path algebra $L_{K}(E)$  in terms of the graph-theoretic properties of the graph $E$ (see chapter 4 \cite{Abrams}). This motivates us to study Principally-injective Leavitt path algebras.\\ Recall that a ring
$R$ is {\it locally unital} if for each finite set $F$ of elements of $R$, there is an idempotent $u$ (i.e $u^{2}=u \in R$) such that $ua = au = a$ for all $a \in F$. The set of all such idempotents $u$ is said to be a set of local units. A ring $R$ is said to be (von Neumann) regular if each $a \in R$ satisfies $a \in aRa$. The von Neumann regular Leavitt path algebras $L_{K}(E)$ of arbitrary graphs $E$ over a field $K$ were characterized in \cite{Abrams1} in terms of the graphical properties of $E$, namely, the graphs $E$ must have no cycles. \\For a subset $X$ of a ring $R$ (not necessarily unital), the set\\ $r_{R}(X) =\lbrace t \in R: xt=0 \ \forall x \in X\rbrace$ is right annihilator of $X$. The left annihilator $l_{R}(X)$ is also defined in a similar fashion for $X \subseteq R$. It is straightforward to check that $r_{R}(X)$ is a right and $l_{R}(X)$ is a left ideal of $R$. A ring $R$ is called right {\it principally injective (P-injective)} if every $R$-homomorphism $f : aR \longrightarrow R$, $a \in R$, extends to $g : R \longrightarrow R$ or equivalently, if every system of equations $xa = b \ (a,b \in R)$ has a solution $x$ in $R$ (see  Proposition 3.17\cite{Lam}). Thus every right self-injective ring is right P-injective. We shall see in Lemma \ref{ecpinj} that the following are equivalent for a (locally unital) ring $R$ (i) $R$ is right P-injective (ii) $l_{R}r_{R}(a) = Ra$ for all $a \in R$. We note in Lemma \ref{regular} that every (locally unital) regular ring $R$ is both right and left P-injective.  As consequences we will prove in Theorem \ref{thm} that every regular Leavitt path algebra $L_{K}(E)$ is regular iff $L_{K}(E)$ is $P$-injective iff the graph $E$ contains no cycle. 
\\For the other definitions in this note, we refer to \cite{Nicholson}, \cite{Lam}and \cite{Aranda}.\\

\section{Preliminaries}\label{S:pre}
We recall the fundamental terminology for our note which can be find in the text \cite{Abrams}. For the sake
of completeness, we shall outline some of the concepts and results that we will be using.
A (directed) graph $E = (E^{0};E^{1}; r; s)$ consists of two sets $E^{0}$ and $E^{1}$ together with maps $r, s : E^{1} \longrightarrow E^{0}$. The elements of $E^{0}$ are called $vertices$ and the elements
of $E^{1}$ $edges$. For each $e \in E^{1}$, $r(e)$ is the range of $e$ and $s(e)$ is the source of $e$. If $r(e) = v$
and $s(e) = w$, then we say that $v$ emits $e$ and that $w$ receives $e$. A vertex $v$ is called a $sink$ if it emits no edges and a vertex $v$ is called a $regular$ vertex if it emits a non-empty finite set of edges. An $infinite\ emitter$ is a vertex which emits infinitely many edges. For each $e \in E^{1}$, we call $e^{*}$ a ghost edge. We let $r(e^{*})$ denote $s(e)$, and we let $s(e^{*})$ denote $r(e)$. A path $\mu$ of length $n \gneq 0$ is a finite sequence of edges $\mu = e_{1} e_{2}\cdot  \cdot \cdot e_{n}$ with $r(e_{i}) = s(e_{i+1})$ for all $i = 1,\cdot \cdot \cdot, n-1$. In this case $\mu^{*} = e_{n}^{*} \cdot \cdot \cdot e_{2}^{*}e_{1}^{*}$ is the corresponding ghost path. A vertex is considered a path of length $0$. For a vertex $v$, we define $v^{*} = v$. The set of all vertices on the path $\mu$ is denoted by $\mu^{0}$. The set of all paths in $E$ is denoted by Path$(E)$.
A path $\mu = e_{1} e_{2}...... e_{n}$ in $E$ is $closed$ if $r(e_{n}) = s(e_{n})$, in which case $\mu$ is said to be based at the vertex $s(e_{1})$. A closed path $\mu$ as above is called $simple$ provided it does not pass through its base more than once, i.e., $s(e_{i}) \neq  s(e_{1})$ for all $i = 2, ..., n$. The closed path $\mu$ is called a $cycle$ if it does not pass through any of its vertices twice, that is, if $s(e_{i}) \neq s(e_{j})$ for every $i \neq j$.\\
Given an arbitrary graph $E$ and a field $K$, the Leavitt path algebra $L_{K}(E)$ is defined to be the $K$-algebra generated by a set $\lbrace v : v \in E^{0} \rbrace$ of pair-wise orthogonal idempotents
together with a set of variables $\lbrace e, e^{*} : e \in E^{1} \rbrace$ which satisfy the following conditions:\\
(1) $s(e)e = e = er(e)$ for all $e \in E^{1}$.\\
(2) $r(e)e^{*} = e^{*} = e^{*}s(e)$ for all $e \in E^{1}$.\\
(3) (The CK-1 relations) For all $e, f \in E^{1},\ e^{*}e = r(e)$ and\\ $e^{*}f = 0$ if $e \neq f$.\\
(4) (The CK-2 relations) For every regular vertex $v \in E^{0}$,\\
$v = \sum\limits_{e \in E^{1},\ s(e)=v} ee^{*}$.\\

A useful observation is that every element $a$ of $L_{K}(E)$ can be written in the form $a =\sum_{i=1}^{n}k_{i}\alpha_{i}\beta_{i}^{*} $, where $k_{i} \in K$, $\alpha_{i}$, $\beta_{i}$ are paths in $E$ and $n$ is a suitable integer.

We  mention  two basic examples:\\

$(1)$ If $E$ is the graph having one vertex and a single loop\\
\begin{tikzpicture}[->,>=stealth',shorten >=2pt,auto,node distance=4cm,thick]
\node[text width=3cm, anchor=south,left] at (-.001,0){};

\node at (0,0) (1) {$\bullet_{v}$};

\path
(1) edge [loop above] node {$c$} (1);
  
\end{tikzpicture}

then $L_{K}(E)\cong K[x,x^{-1}]$, the Laurent polynomial $K$-algebra via $v \longmapsto 1$, $c\longmapsto x$ and $c^{*}\longmapsto x^{-1}$.

$(2)$ If $E$ is the oriented $n$-line graph having $n$ vertices and $n-1$ edges

\begin{tikzpicture}
\node[text width=3cm, anchor=south,left] at (-.001,0){};
\node at (0,0) (1) {$\bullet_{v_{1}}$};
\node[right =1cm of 1] (2) {$\bullet_{v_{2}}$};
\node[right =1cm of 2] (3) {$...........$};
\node[right =1cm of 3] (4) {$\bullet_{v_{n}}$};
\draw[->](1) to node[above] {$e_{1}$} (2);
\draw[->](2) to node[above] {$e_{2}$} (3);
\draw[->](3) to node[above] {$e_{n-1}$} (4);
\end{tikzpicture}\\
then $L_{K}(E)\cong M_{n}(K)$,  via $v_{i}\longmapsto f_{i,i}$, $e_{i}\longmapsto f_{i,i+1}$ and $e_{i}^{*} \longmapsto f_{i+1,i}$, where $\lbrace f_{i,j} : 1 \leq i,j\leq n \rbrace$ denotes the standard matrix units in $M_{n}(K)$.
\\\\

\section{Results} 

%\section{Basic results} 
We shall need the following lemmas.
\begin{lemma} (cf. Lemma 5.1 \cite{Nicholson})\label{ecpinj} The following conditions are equivalent for a locally unital ring $R$:\\ (i) $R$ is right P-injective.\\
(ii) $l_{R}r_{R}(a) = Ra$ for all $a \in R$.
\end{lemma}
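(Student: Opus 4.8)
The plan is to prove the equivalence of (i) and (ii) by unwinding the definition of right P-injectivity in terms of extensions of homomorphisms, keeping careful track of the fact that $R$ has local units rather than a global unit. Throughout, I would fix $a \in R$ and a local unit $u$ with $ua = au = a$.

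First I would prove (i)$\Rightarrow$(ii). The inclusion $Ra \subseteq l_R r_R(a)$ is immediate: if $t \in r_R(a)$ then $at = 0$, so $(ra)t = 0$ for every $r$, hence $ra \in l_R r_R(a)$. For the reverse inclusion, take $b \in l_R r_R(a)$, so $r_R(a) \subseteq r_R(b)$, equivalently $at = 0 \implies bt = 0$ for all $t \in R$. I want to produce $x \in R$ with $xa = b$. The idea is to define a right $R$-module map $f : aR \to R$ by $f(ar) = br$; this is well-defined precisely because $ar = ar'$ forces $a(r-r') = 0$, hence $b(r-r')=0$, i.e. $br = br'$. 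It is clearly additive and right $R$-linear. By right P-injectivity, $f$ extends to some $g : R \to R$; but since $R$ is only locally unital I should be slightly careful about what "extends to $R$" means — in the locally unital setting the natural statement (matching the system-of-equations formulation quoted from \cite{Lam}) is that there exists $x \in R$ with $f(ar) = xar$ for all $r$, equivalently $f(ay) = xay$. Applying this with the local unit $u$ (using $au = a$) gives $b = bu = f(au) = xau = xa$, as desired. If one prefers to take "$R$ is right P-injective" literally as "every $f: aR \to R$ extends to $g : R \to R$", then one sets $x = g(u)$ after noting $g(au) = g(a)$ via $au = a$, or more directly uses that $g$ is left multiplication by an element; either way the local unit $u$ does the bookkeeping.

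Next I would prove (ii)$\Rightarrow$(i). Let $f : aR \to R$ be right $R$-linear; I must find $x$ with $f(ar) = xar$ for all $r \in R$ (equivalently solve $xa = f(a u)$ appropriately, or just $xa=b$ where $b := f(a)$ interpreted via a local unit). Set $b = f(au) = f(a)u$ — again using $au = a$ one sees $f(a) = f(au) = f(a)u$, so $b = f(a)$. The key observation is $r_R(a) \subseteq r_R(b)$: if $at = 0$ then $at \in \Ker$, so $0 = f(0) = f(at) = f(a)t = bt$. Hence $b \in l_R r_R(a) = Ra$ by hypothesis, so $b = xa$ for some $x \in R$. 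Then for any $r$, $f(ar) = f(a)r = br = xar$, so $f$ is realized as left multiplication by $x$ on $aR$, which is exactly the required extension (left multiplication by $x$ is defined on all of $R$). This completes (i)$\Leftrightarrow$(ii).

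The main obstacle I anticipate is purely a matter of care rather than depth: making sure the classical proof (which in \cite{Nicholson} is stated for rings with identity) goes through for locally unital rings. The two places where a global $1$ would normally be used are (a) writing $b = f(a)$ and later recovering $b$ from the extension by evaluating at $1$, and (b) identifying an endomorphism $g$ of $R_R$ with left multiplication by $g(1)$. Both are handled by the same device: for the fixed element $a$, pick a local unit $u$ absorbing $a$ (and, when needed, $b$), and use $u$ wherever the unital proof uses $1$; all the identities $au = a$, $ub = b$ etc. hold because $b \in Ra \subseteq uR$ (or because $b = f(a)u$). I would also remark that the equivalence of the "extends to $R$" formulation with the "$xa = b$ solvable" formulation is exactly the content of Proposition 3.17 of \cite{Lam} adapted to local units, so no separate argument is needed there.
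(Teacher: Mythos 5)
Your proof is correct and follows essentially the same route as the paper's: both directions hinge on defining $f(ar)=br$ (resp.\ reading off $b=f(a)$), using $r_R(a)\subseteq r_R(b)$, and letting a local unit $u$ play the role of $1$ via $f(a)=g(ua)=g(u)a$. If anything, you are slightly more careful than the paper in insisting that $u$ absorb $b$ as well as $a$, which is a point the paper's argument glosses over; no substantive difference otherwise.
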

\begin{proof} $(i) \implies (ii)$ For any $z \in r_{R}(a)$, $az= 0$. This implies that $a \in l_{R}(z), \forall z \in r_{R}(a)$, yielding $Ra \subseteq l_{R}r_{R}(a)$. Let $x \in l_{R}r_{R}(a)$. Then, $r_{R}(a)\subseteq r_{R}(x)$. Define $f : aR \rightarrow R$ by $f(at)=xt$. This is well-defined as $at = at'$ implies that
	$a(t-t')= 0$, that is $(t- t') \in  r_{R}(a) \subseteq r_{R}(x)$. So, $f(at) = xt= xt' = f(at')$. By (i), there exist $g: R \rightarrow R$ such that $f(a)= g(a)=  g(ua)= g(u)a \in Ra$, where $u$ is the local unit for $a$. Hence $l_{R}r_{R}(a) = Ra$.\\$(ii) \implies (i)$ Let $f : aR  \rightarrow R$, $a \in R$, be $R$-linear. Then $f(a) = d$, for
	some $d \in R$. We show that $d \in Ra$. Take $x \in r_{R}(a)$. Then $0 = f(ax) = f(a)x = dx$, so, $r_{R}(a) \subseteq r_{R}(d)$. This implies that $l_{R}r_{R}(d) \subseteq l_{R}r_{R}(a)$. So, $d \in l_{R}r_{R}(a)=Ra$, therefore, $d \in Ra$. Hence $f(a)=d=ca$ for some $c \in R$. Define $g : R \rightarrow R$ by $g(x)=cx$. Then $g$ is the required extension of $f$ on $R$.
\end{proof}
Before deriving the next lemma, we insert a remark here.
\begin{remark} For the graph $E=$ \begin{tikzpicture}[->,>=stealth',shorten >=2pt,auto,node distance=4cm,thick]
	\node[text width=3cm, anchor=south,left] at (-.001,0){};
	
	\node at (0,0) (1) {$\bullet_{v}$};
	
	\path
	(1) edge [loop above] node {$c$} (1);
\end{tikzpicture}
	
the corresponging Leavitt path algebra $R=L_{K}(E)$ is not right (left) P-injective can be seen as follows: $c^{*} \in l_{R}r_{R}(v-c)$ but $c^{*} \notin R(v-c)$. Thus $l_{R}r_{R}(v-c) \neq R(v-c)$.
\end{remark}

\begin{lemma}\label{regular} Let $R$ be a locally unital ring. If $R$ is (von Neumann) regular then $R$ is right (left) P-injective.
\end{lemma}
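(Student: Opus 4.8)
The plan is to use the annihilator characterization of P-injectivity from Lemma \ref{ecpinj}, rather than arguing directly with homomorphisms $f\colon aR\to R$. So I would first reduce the statement to showing that if $R$ is regular then $l_R r_R(a)=Ra$ for every $a\in R$ (and symmetrically $r_R l_R(a)=aR$ for the left-sided conclusion). The inclusion $Ra\subseteq l_R r_R(a)$ is automatic and was already observed in the proof of Lemma \ref{ecpinj}, so the real content is the reverse inclusion $l_R r_R(a)\subseteq Ra$.

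The key idea is that regularity lets us replace $a$ by an idempotent generating the same left ideal. Given $a\in R$, pick $b\in R$ with $a=aba$, and let $e=ab$. Then $e$ is an idempotent, $ea=a$, and $Ra=Re$ (the inclusion $Ra\subseteq Re$ uses $a=ea$, and $Re=Rab\subseteq Ra$). Next I would compute the annihilators: since $ea=a$ and $a=aba$ we get $r_R(a)=r_R(e)$? Not quite — rather, I claim $r_R(a)$ and $r_R(e)$ interact well enough. Actually the cleanest route: $r_R(e)\subseteq r_R(a)$ because $a=ea$; conversely if $ax=0$ then $ex=abx$, which need not vanish, so instead I work with the right idempotent. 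Let me instead set $f=ba$, another idempotent, with $af=a$ and $Ra \ni$ ... hmm, $af=aba=a$ so $aR=fR$? No: $af=a$ gives $aR\subseteq fR$, and $fR=baR\subseteq aR$? That needs $ba\in aR$, false in general.

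The correct symmetric setup is: with $e=ab$ idempotent we have $Ra=Re$ and also $r_R(a)\supseteq r_R(e)$, but more usefully $l_R(a)=l_R(e)$ — indeed $ya=0\iff yab\cdot(\text{something})$; precisely, $ya=0\Rightarrow ye=yab$; this need not be $0$. I think the genuinely clean statement is: for the idempotent $e=ab$, one has $eR=aR$ is false but $Re=Ra$ is true, and dually for $f=ba$ one has $fR=aR$ is false but... Let me reconsider. The standard fact is: if $e$ is idempotent then $l_R r_R(e)=Re$, because $r_R(e)=(1-e)R$ in the unital case, so $l_R r_R(e)=l_R((1-e)R)=\{y: y(1-e)=0\}=Re$; in the locally unital case one runs the same computation with a local unit $u$ for the relevant elements, using $u-e$ in place of $1-e$. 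So the plan is: (a) reduce to $l_R r_R(a)\subseteq Ra$; (b) produce idempotent $e=ab$ with $Ra=Re$; (c) show $r_R(a)=r_R(e)$ — here I use $a=ea$ for one inclusion, and for the other, $ax=0\Rightarrow$ apply $b$ on the left of $a$: from $ax=0$ we get nothing about $ex=abx$ directly, so instead note $r_R(e)$: if $ex=0$ then $abx=0$ then $a(bx)=0$... this shows $bx\in r_R(a)$, not what I want. The actual equality that holds is $r_R(a)\subseteq r_R(e)$ fails; what holds is $r_R(e)\subseteq r_R(a)$ via $a=ea$, AND $l_R r_R(a)=l_R r_R(e)$ can still be deduced if instead $r_R(a)$ and $r_R(e)$ have the same left-annihilator-of-right-annihilator; (d) conclude $l_R r_R(a)=l_R r_R(e)=Re=Ra$.

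The main obstacle, then, is pinning down exactly which annihilator equality survives the passage from $a$ to the idempotent $e=ab$; I expect the right bookkeeping is to show directly $r_R(e)\subseteq r_R(a)\subseteq r_R(f)$ for a suitable second idempotent $f$ with $Rf=Re=Ra$, squeezing $l_R r_R(a)$ between $l_R r_R(f)=Rf$ and $l_R r_R(e)=Re$ which coincide. Alternatively, and probably more efficiently, I would bypass idempotents entirely: take $x\in l_R r_R(a)$, use $a=aba$ to note that $a(u-ba)=0$ for a local unit $u$ of $\{a,b,ab,ba,x,\dots\}$, hence $u-ba\in r_R(a)$, hence $x(u-ba)=0$, i.e. $x u=xba$, i.e. $x=xba=(xb)a\in Ra$ — this is the whole argument in one line, and it is what I would actually write. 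The left-sided statement follows by the symmetric computation using $u-ab\in l_R(a)$. So the real proof is short; the ``obstacle'' is merely remembering to invoke local units correctly (applying the local unit $u$ to land $xu=x$), which is the only place the non-unital hypothesis intervenes.
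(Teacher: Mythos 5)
Your final one-line argument is essentially the paper's own proof: both reduce via Lemma \ref{ecpinj} to showing $l_R r_R(a)\subseteq Ra$, write $a=aba$, exhibit an element of $r_R(a)$ of the form (local unit)$\,-\,ba$ times something, and apply $x$ to conclude $x\in Ra$. If anything your version is slightly cleaner, since you take a single local unit $u$ for the whole finite set $\{a,b,x\}$ so that $xu=x$ and $au=a$ both hold, whereas the paper's choice of $v$ as a local unit of $x$ alone makes its displayed element $(v-ra)va$ yield only $xva=xrava$ rather than $x=xrava$ as claimed; the discarded detour through idempotents in your write-up is harmless since you do not rely on it.
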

\begin{proof}Always, $Ra \subseteq l_{R}r_{R}(a)$ for any $a \in R$. To see $l_{R}r_{R}(a) \subseteq Ra$. Let us take $x \in l_{R}r_{R}(a)$, then $r_{R}(a)\subseteq r_{R}(x)$. Write $a=ara$ for some $r \in R$ and choose $v$ a local unit of $x$. Then $(v-ra)va \in r_{R}(a)\subseteq r_{R}(x)$. This shows that $x=xrava \in Ra$. The result now follows by Lemma \ref{ecpinj}.
\end{proof}

\indent Recall that a ring $R$ is said to be {\it semiprime} if, for every ideal $I$ of $R$, $I^{2}=0$ implies $I=0$.

\begin{lemma}(cf. Lemma 4.3.4 \cite{Abrams}) \label{pp} Let $R$ be a locally unital ring which is semiprime and right P-injective. Then for every idempotent $e \in R$, the corner $eRe$ is right P-injective.
\end{lemma}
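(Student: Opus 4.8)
The plan is to use the annihilator characterization from Lemma \ref{ecpinj}: it suffices to show that for every $a \in eRe$ we have $l_{eRe}\, r_{eRe}(a) = (eRe)a$, where the annihilators are now computed \emph{inside} the corner ring $eRe$. The inclusion $(eRe)a \subseteq l_{eRe}\, r_{eRe}(a)$ is automatic, so the work is the reverse inclusion. The standard trick for passing annihilators between $R$ and $eRe$ is to relate $r_{eRe}(a)$ with $r_R(a) \cap eRe = r_R(a)e$ and, dually, to relate left annihilators via the multiplication-by-$e$ maps. First I would fix $a \in eRe$ and $x \in l_{eRe}\, r_{eRe}(a)$, so $x = exe$ and $xt = 0$ for every $t \in eRe$ with $at = 0$.

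The key step is to show that $x$ actually lies in $l_R\, r_R(a)$, i.e. that $x$ kills the \emph{full} right annihilator $r_R(a)$, not merely its intersection with $eRe$. Take $s \in r_R(a)$, so $as = 0$. Then $es \in R$ and $a(es)e = a e s e$; since $a = ea = ae$, we get $a\cdot ese = aese$, and because $as=0$ we have $aes = aes$... here is where semiprimeness and P-injectivity of $R$ enter, along the lines of the proof of Lemma 4.3.4 in \cite{Abrams}: one shows that the element $ese$ (or an appropriate modification) lies in $r_{eRe}(a)$, hence is killed by $x$, and then one bootstraps from $x(ese)=0$ for all such $s$ to $xs = 0$ for all $s \in r_R(a)$. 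The semiprime hypothesis is what lets us conclude that if $x R (1-e)$-type products all vanish after squaring inside a suitable ideal, then they vanish outright; concretely, $xs = x(ese) + x(s - ese)$ and the second summand is handled by observing that $x(s-ese)$ generates a nilpotent (square-zero) ideal-like object, forcing it to be $0$ by semiprimeness. Once $x \in l_R\, r_R(a)$, Lemma \ref{ecpinj} applied to $R$ gives $x \in Ra$, say $x = ca$ with $c \in R$. Then, using $x = xe = ex$ and $a = ea = ae$, we obtain $x = ex = e(ca) = e c a = ec a = (ece)a \in (eRe)a$ (absorbing the $e$ on the right of $c$ using $a = ea$).

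Finally I would double-check that local units behave well: $e$ itself serves as a local unit for every element of $eRe$, so $eRe$ is again a locally unital ring and Lemma \ref{ecpinj} is applicable to it; this also tidies the last displayed manipulation, since $ece \in eRe$ and $(ece)a = eca = x$. Assembling the two inclusions yields $l_{eRe}\, r_{eRe}(a) = (eRe)a$ for all $a \in eRe$, so $eRe$ is right P-injective by Lemma \ref{ecpinj}.

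I expect the main obstacle to be the middle step: promoting $x \in l_{eRe}\, r_{eRe}(a)$ to $x \in l_R\, r_R(a)$. The difficulty is that $r_{eRe}(a)$ only sees the part of the right annihilator that is ``visible'' through the idempotent $e$, and an element $x$ killing that part need not a priori kill all of $r_R(a)$; this is precisely where the semiprime hypothesis is indispensable, and getting the square-zero/nilpotency argument right — identifying the correct ideal and verifying $I^2 = 0$ — is the delicate part. Everything else is formal manipulation with the relations $e^2 = e$, $ea = ae = a$, and the locally-unital bookkeeping.
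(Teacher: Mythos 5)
Your proposal follows the paper's proof in all essentials: reduce to the annihilator criterion of Lemma \ref{ecpinj}, promote $x \in l_{eRe}r_{eRe}(a)$ to $x \in l_{R}r_{R}(a)$ via a square-zero-ideal argument using semiprimeness, then pull $x \in Ra$ back into the corner via $x = ex$ and $a = ea$. The only piece you leave open is the ``delicate part'' you flag at the end, and it does go through; here is the missing computation. Fix $s \in r_{R}(a)$. Since $r_{R}(a)$ is a right ideal, $sk \in r_{R}(a)$ for every $k \in R$, so $a(eske) = aeske = aske = 0$ (using $ae = a$); hence $eske \in r_{eRe}(a)$ and therefore $x(eske) = 0$. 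Because $xe = x$ this reads $xske = 0$ for all $k \in R$, i.e. $(xs)Re = 0$, and because $ex = x$ we have $xs = e(xs)$. Consequently the two-sided ideal $I = R(xs)R$ satisfies $I^{2} \subseteq R(xs)R(xs)R = R(xs)Re(xs)R \subseteq R\bigl((xs)Re\bigr)(xs)R = 0$, so $I = 0$ by semiprimeness, and $xs = 0$ since $R$ has local units. (Note your candidate element $x(s - ese)$ is just $xs$, because $x(ese) = 0$; the correct ideal is $R(xs)R$, and the needed identity is $(xs)Re=0$ together with $xs = exs$, not merely $x(ese)=0$ for the single element $s$.) This gives $r_{R}(a) \subseteq r_{R}(x)$, so $x \in l_{R}r_{R}(a) = Ra$ by P-injectivity of $R$, and finally $x = ex \in eRa = eRea = (eRe)a$, exactly as you conclude.
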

\begin{proof}Write $S = eRe$ and let $x \in l_{S}r_{S}(a)$, where $x, a \in S$. Then $r_{S}(a) \subseteq r_{S}(x)$. By Lemma  \ref{ecpinj}, it suffices to show that $r_{R}(a) \subseteq r_{R}(x)$ (then $x \in Ra$, so $x=ex\in eRa=Sa$, as required). So let $y \in r_{R}(a)$, then $ay=0 \Rightarrow aey=0 \Rightarrow aeyke=0,\ \forall k \in R \Rightarrow  xeyke=0, \ \forall k \in R$ (since $r_{S}(a) \subseteq r_{S}(x)$). Thus, $xyRe=0$ and $exy=xy$. Now consider the two-sided ideal $RxyR$ of $R$, and note that $(RxyR)^{2} \subseteq RxyRxyR \subseteq RxyRexyR=\lbrace 0 \rbrace$. But $R$ is semiprime and hence $RxyR=\lbrace 0 \rbrace$ implying that $xy=0$ as $R$ has local units. This completes the proof.
	
\end{proof}
\indent Recall that a ring is a right {\it PP ring} if every principal right ideal is projective.\indent It is worth mentioning that a ring $R$ without identity may not be a projective $R$-module. But a Leavitt path algebra over an arbitrary graph is always projective as a module over itself (see Corollary 2.3 \cite{Aranda}).\\

\indent We use a part of Lemma 8 \cite{Hazrat} and note the following lemma.

\begin{lemma}(cf. Example 5.8. \cite{Nicholson})\label{von} Let $R$ be a locally unital ring which is semiprime. If $R$ is right P-injective, right PP ring  then $R$ is (von Neumann) regular.
\end{lemma}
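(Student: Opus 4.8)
The plan is to combine the two hypotheses—right P-injectivity and the right PP property—to produce, for each $a \in R$, an element of $aRa$ equal to $a$, i.e.\ to exhibit von Neumann regularity directly. First I would use the right PP hypothesis: since $aR$ is a projective right $R$-module, the surjection $R \to aR$ (actually $uR \to aR$ for a local unit $u$ of $a$, sending $r \mapsto ar$) splits, so $aR$ is a direct summand of $uR \cong R_R$. Consequently $r_R(a)$ is a direct summand of $R_R$, say $R = r_R(a) \oplus eR$ for some idempotent $e$; equivalently the right ideal $r_R(a)$ is generated by an idempotent $f = 1_u - e$ (working locally at $u$), so $r_R(a) = fR$ with $f^2 = f$.

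Next I would invoke right P-injectivity through Lemma \ref{ecpinj}: $l_R r_R(a) = Ra$. Since $r_R(a) = fR$ and $(1_u - f)f = 0$, we get $1_u - f \in l_R r_R(a) = Ra$, say $1_u - f = ca$ for some $c \in R$. Then for the local unit $u$ of $a$ we have $a = ua = (1_u - f)a + fa$; but $fa$—here one must be careful about sides—I would instead argue that $af = 0$ need not hold, so the cleaner route is: from $r_R(a) = fR$ and the fact that $f$ is an idempotent generator, $a f = 0$ is \emph{not} what we have; rather $f r = 0$ for... Let me reorganize: the right annihilator being a summand gives $r_R(a) = fR$ with $f^2 = f$, and the complement $eR$ with $e = u - f$ (so $ef = fe = 0$, $e + f = u$). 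Then $a = ae$ because $af \in a\,r_R(a) = 0$. Now $e = u - f$, and $f \in r_R(a) = fR$; applying Lemma \ref{ecpinj}, $e = u - f \in l_R r_R(a) = Ra$, so $e = da$ for some $d \in R$. Therefore $a = ae = a(da) = (ad)a \in aRa$, proving $R$ is von Neumann regular.

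I expect the main obstacle to be the bookkeeping around local units and the precise splitting statement: one must justify that projectivity of $aR$ as a right module yields that $r_R(a)$ is a direct summand of the \emph{locally unital} module $uR$ (not of a nonexistent $R_R$ with identity), and that the idempotent $f$ generating $r_R(a)$ can be chosen inside the corner $uRu$ so that $e = u - f$ behaves as a complementary idempotent with $a = ae$. Once the idempotent $f$ with $r_R(a) = fR$ and $ae = a$ (for $e = u - f$) is in hand, the appeal to Lemma \ref{ecpinj} to land $e \in Ra$ and hence $a = (ad)a$ is routine. Curiously, the semiprime hypothesis does not seem to be needed for this argument; I would double-check whether it is genuinely required (perhaps to ensure the idempotent generating $r_R(a)$ can be taken in the corner, or it may be vestigial / used only to streamline via Lemma \ref{pp}), and if the proof in the source uses it, adapt accordingly—most likely it enters only to allow passing to corners $eRe$ via Lemma \ref{pp}, but the direct argument above appears to bypass that.
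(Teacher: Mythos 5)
Your strategy is genuinely different from the paper's and, once repaired, it works; but as written there is a real gap at the central step. In a locally unital ring the right ideal $r_R(a)$ is in general \emph{not} of the form $fR$ for an idempotent $f$, and it is not even contained in $uR$: take $R=\bigoplus_{i\ge 1}K$ with componentwise operations, $a=u=$ the first coordinate idempotent; then $r_R(a)$ consists of all elements vanishing in the first coordinate, which meets $uR=Ka$ only in $0$ and is not a principal right ideal (every element of $R$ has finite support). Consequently your deduction ``$(1_u-f)f=0$, hence $1_u-f\in l_Rr_R(a)$'' is incomplete: it only shows that $u-f$ annihilates $fR$, which is $r_R(a)\cap uR$ rather than $r_R(a)$, and you need $u-f$ to annihilate \emph{all} of $r_R(a)$ before you may invoke $l_Rr_R(a)=Ra$ from Lemma \ref{ecpinj}.

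The gap is fixable with one extra observation. The correct splitting is $0\to F\to uR\to aR\to 0$ with $F=r_R(a)\cap uR$, giving $uR=F\oplus C$ with $F,C$ right ideals; writing $u=f+e$ ($f\in F$, $e\in C$), the uniqueness of the decomposition $x=fx+ex$ for $x\in uR$ yields $f^2=f$, $af=0$ and hence $ae=a$. Now take an \emph{arbitrary} $t\in r_R(a)$: then $et\in C$ since $C$ is a right ideal, while $et\in uR$ and $a(et)=(ae)t=at=0$, so $et\in F$; therefore $et\in F\cap C=0$. This gives $e\in l_Rr_R(a)=Ra$, say $e=da$, whence $a=ae=(ad)a\in aRa$. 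With this repair your proof is complete and is in fact cleaner than the paper's: the paper instead passes to the unital corner $S=uRu$, transfers the splitting there, and applies Lemma \ref{ecpinj} to $S$ --- which requires $S$ to be right P-injective and hence requires Lemma \ref{pp} and the semiprime hypothesis. Your suspicion is correct: in the direct argument semiprimeness is never used, so the hypothesis is needed only for the paper's route through the corner.
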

\begin{proof}Let $a$ be any element in $R$ with $u$ a local unit of $a$ . Since $R$ is a right PP ring, $aR$ is projective and so the short exact sequence $0\rightarrow r_{R}(a) \rightarrow R \rightarrow aR \rightarrow 0$ splits. Writing $S=uRu$ and arguing as in Lemma 8 \cite{Hazrat} we get that the following short exact sequence $0\rightarrow r_{S}(a) \rightarrow S \rightarrow aS \rightarrow 0$ splits. Thus, $r_{S}(a)=eS$, where $e^{2}=e \in S$. Hence, (by Lemma  \ref{ecpinj}) $Sa=l_{S}r_{S}(a)=S(u-e)$. Now $Sa \oplus Se=S$ implies that $a \in aRa$.
\end{proof}

\indent Before stating our main theorem, we shall recall few more lemmas.

\begin{lemma}(see Proposition 2.3.1 \cite{Abrams})\label{semiprime} Let $E$ be an arbitrary graph and $K$ be any field. Then the Leavitt path algebra $L_{K}(E)$ is semiprime.
\end{lemma}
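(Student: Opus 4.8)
The plan is to argue directly from the definition of semiprimeness: if $I$ is a two-sided ideal of $R:=L_{K}(E)$ with $I^{2}=0$, I will show $I=0$. Suppose not, and fix $0\neq a\in I$. The decisive tool is the Reduction Theorem for Leavitt path algebras (see \cite{Abrams}): for every nonzero element $a\in L_{K}(E)$ there exist paths $\alpha,\beta$ in $E$ with $\alpha^{*}a\beta\neq 0$, and moreover $\alpha^{*}a\beta$ can be taken to have one of two normal forms --- either $\alpha^{*}a\beta=kv$ for some $v\in E^{0}$ and $0\neq k\in K$, or $\alpha^{*}a\beta$ is a nonzero polynomial $p(c,c^{*})$ in a cycle $c$ without exits based at a vertex $v$, in which case additionally $vL_{K}(E)v\cong K[x,x^{-1}]$.

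First I would apply this to our $a$, obtaining $\alpha,\beta$ and $b:=\alpha^{*}a\beta\neq 0$. Since $I$ is a two-sided ideal and $\alpha^{*},\beta\in R$, we get $b\in I$, hence $b^{2}\in I^{2}=0$, so $b^{2}=0$. I then use that in both normal forms $b$ is a non-nilpotent element. In the first case $b=kv$ is a nonzero scalar multiple of a vertex, and $b^{2}=k^{2}v\neq 0$ because $k^{2}\neq 0$ in the field $K$ while $v$ is a nonzero idempotent of $L_{K}(E)$; this contradicts $b^{2}=0$. In the second case $b$ is a nonzero element of $vL_{K}(E)v\cong K[x,x^{-1}]$, which is an integral domain, so $b^{2}=0$ forces $b=0$, again a contradiction. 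Either way $b\neq 0$ is impossible, so $I=0$ and $R$ is semiprime.

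The entire weight of the argument sits in the Reduction Theorem, so that is the step I would expect to be the real obstacle; I would cite \cite{Abrams} for it rather than reprove it, since its proof is the standard but lengthy manipulation of monomials $\mu\nu^{*}$ via the Cuntz--Krieger relations. Two routine supporting remarks should be flagged: first, ideals of the $K$-algebra $L_{K}(E)$ are automatically $K$-submodules --- if $x\in I$ has local unit $u$ then $kx=(ku)x\in RI\subseteq I$ for all $k\in K$ --- which legitimizes the scalar bookkeeping above; second, only the weak consequence $b^{2}\in I^{2}$ of $b\in I$ is actually used, not the full inclusion $bRb\subseteq I^{2}$. An alternative route worth noting, though I would not pursue it, exploits the canonical $\mathbb{Z}$-grading of $L_{K}(E)$: a $\mathbb{Z}$-graded ring is semiprime if and only if it has no nonzero graded ideal of square zero, and every nonzero graded ideal of $L_{K}(E)$ contains a vertex $v=v^{2}$ and therefore has nonzero square; this works but requires the classification of graded ideals as additional input.
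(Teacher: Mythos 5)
Your proof is correct, and it is essentially the standard argument: the paper itself gives no proof of this lemma, merely citing Proposition 2.3.1 of \cite{Abrams}, whose proof is exactly the Reduction Theorem dichotomy you use (a nonzero ideal contains either a nonzero scalar multiple of a vertex or a nonzero element of a corner isomorphic to $K[x,x^{-1}]$, and neither can square to zero). Nothing further is needed.
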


\begin{lemma}(see Theorem 3.7. \cite{Ara}) \label{onesided} Let $E$ be an arbitrary graph and $K$ be any field. Then every one-sided ideal of $L_{K}(E)$ is projective.
\end{lemma}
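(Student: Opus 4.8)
The plan is to deduce the statement from two facts: that $L_{K}(E)$ is a universal localisation of a path algebra, and that universal localisation preserves heredity. Call a ring right hereditary if each of its right ideals is a projective right module; the goal is then that $L_{K}(E)$ is both left and right hereditary. Since $L_{K}(E)$ carries the $K$-linear anti-automorphism $^{*}$ that fixes every vertex, interchanges each $e$ with $e^{*}$ and reverses products, the lattice of left ideals is anti-isomorphic to the lattice of right ideals, and a left ideal is projective exactly when the corresponding right ideal is. Hence it suffices to prove that every right ideal of $L_{K}(E)$ is projective.

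I would first reduce to the case that $E$ is row-finite. If $E$ has infinite emitters, replace it by its Drinen--Tomforde desingularisation $E_{d}$: this is a row-finite graph, and $L_{K}(E)$ is isomorphic to a full corner $p\,L_{K}(E_{d})\,p$ of $L_{K}(E_{d})$, hence Morita equivalent to $L_{K}(E_{d})$. As Morita equivalence preserves the property that every one-sided ideal is projective, it is enough to treat $E_{d}$, and we may assume from now on that $E$ is row-finite.

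So let $E$ be row-finite and let $KE$ be the path $K$-algebra of $E$ (the path algebra of $E$ as a quiver), a ring with local units given by the vertices of $E$. For each regular vertex $v$, let $\mu_{v}\colon\bigoplus_{e\in s^{-1}(v)}r(e)\,KE\longrightarrow v\,KE$ be the right $KE$-module homomorphism whose $e$-th component is left multiplication by the edge $e$; since $v$ emits only finitely many edges, $\mu_{v}$ is a morphism between finitely generated projective right $KE$-modules. Put $\Sigma=\{\mu_{v}:v\text{ a regular vertex}\}$ and form the universal localisation $\Sigma^{-1}(KE)$. In $\Sigma^{-1}(KE)$ the inverse of $\mu_{v}$ has components which we denote $e^{*}\colon v\,KE\to r(e)\,KE$, and the identities $\mu_{v}\circ\mu_{v}^{-1}=\mathrm{id}_{v\,KE}$ and $\mu_{v}^{-1}\circ\mu_{v}=\mathrm{id}$ become precisely the relation $\sum_{s(e)=v}ee^{*}=v$ together with $e^{*}e=r(e)$ and $e^{*}f=0$ for $e\neq f$; the relations $s(e)e=e=e\,r(e)$ already hold in $KE$. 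Matching generators and relations, the universal properties of $L_{K}(E)$ and of the universal localisation yield canonical homomorphisms $L_{K}(E)\to\Sigma^{-1}(KE)$ and $\Sigma^{-1}(KE)\to L_{K}(E)$ which are mutually inverse, so $L_{K}(E)\cong\Sigma^{-1}(KE)$.

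Finally, the path $K$-algebra $KE$ is hereditary, a classical fact that remains valid for infinite graphs; one sees this, for instance, from the length-one projective resolutions $0\to\bigoplus_{e\in s^{-1}(v)}e\,KE\to v\,KE\to S_{v}\to 0$ of the simple right modules $S_{v}$, together with the standard length-one resolution of $KE$ as a bimodule over itself. By Schofield's theorem, the universal localisation of a right hereditary ring at a set of morphisms between finitely generated projective right modules is again right hereditary, and this goes through for rings with local units. Therefore $L_{K}(E)\cong\Sigma^{-1}(KE)$ is right hereditary, and by the first paragraph every one-sided ideal of $L_{K}(E)$ is projective. The step I expect to demand the most care is the realisation $L_{K}(E)\cong\Sigma^{-1}(KE)$ together with making the universal-localisation machinery, and Schofield's preservation of heredity, work for rings with local units: one must either set up universal localisation in that generality directly or pass through the unitalisation and descend, and one must verify that each $\mu_{v}$ is genuinely a morphism between finitely generated projectives — which is exactly why only the regular vertices enter $\Sigma$, and why the reduction to row-finite graphs was made.
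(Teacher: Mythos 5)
Your overall strategy is exactly the one behind the paper's source for this lemma: the paper gives no proof of its own, citing Theorem 3.7 of \cite{Ara}, and that theorem is proved precisely by exhibiting the (separated) Leavitt path algebra as a universal ring construction over the hereditary path algebra and invoking the preservation of heredity under such constructions (Bergman--Dicks--Schofield), adapted to rings with local units. Your identification $L_K(E)\cong\Sigma^{-1}(KE)$ for row-finite $E$, the computation showing that inverting $\mu_v$ imposes exactly (CK-1) and (CK-2) at the regular vertices, the involution argument reducing left ideals to right ideals, and the heredity of $KE$ are all correct, and you rightly flag the non-unital versions of universal localisation and of Schofield's theorem as the technical heart (this is in essence what \cite{Ara} supplies).

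The genuine gap is the reduction to the row-finite case. The Drinen--Tomforde desingularisation replaces an infinite emitter $v$ by an infinite tail $v\to v_1\to v_2\to\cdots$ along which the edges of $s^{-1}(v)$ are redistributed one at a time; this requires an enumeration of $s^{-1}(v)$ by the natural numbers, so it only exists when every infinite emitter emits \emph{countably} many edges. The lemma is asserted for arbitrary graphs, where $s^{-1}(v)$ may be uncountable, and then the desingularisation $E_d$ (and with it your Morita equivalence) is simply not available. This matters for the rest of your argument, because for a non-row-finite graph the ghost edges $e^{*}$ with $s(e)$ an infinite emitter are \emph{not} produced by inverting any map between finitely generated projectives: there is no (CK-2) relation at $v$, and $\bigoplus_{e\in s^{-1}(v)}r(e)\,KE$ is not finitely generated, so $\mu_v$ is not an admissible member of $\Sigma$. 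The fix, which is what \cite{Ara} actually does, is to adjoin for each finite $F\subseteq s^{-1}(v)$ a one-sided (split) inverse of $\mu_F\colon\bigoplus_{e\in F}r(e)\,KE\to v\,KE$ rather than a two-sided inverse --- a Bergman-style universal construction that still preserves heredity --- reserving genuine inversion for the regular vertices. With that replacement your argument goes through for arbitrary graphs; as written, it proves the lemma only for graphs all of whose infinite emitters have countable out-degree.
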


We are now in a position to show that every right (or left) P-injective Leavitt path algebras are (von Neumann) regular.
\begin{theorem}\label{thm} Let $E$ be an arbitrary graph and $K$ be any field. Then the following are equivalent.\\(1) $L_{K}(E)$ is right (left) P-injective\\ (2) $L_{K}(E)$ is von Neumann regular.\\
	(3) $E$ is acyclic (i.e contains no cycle).\\
	(4) $L_{K}(E)$ is locally $K$-matricial.\\ 
\end{theorem}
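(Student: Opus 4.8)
The plan is to establish the cycle of implications $(3)\Rightarrow(2)\Rightarrow(1)\Rightarrow(3)$, together with $(3)\Leftrightarrow(4)$, so that all four statements become equivalent. The implications $(3)\Rightarrow(2)$ and $(3)\Leftrightarrow(4)$ are not new: the characterization of von Neumann regular Leavitt path algebras as exactly those over acyclic graphs, and their description as locally $K$-matricial algebras, is precisely the content of \cite{Abrams1}, so I would simply cite it. The implication $(2)\Rightarrow(1)$ is immediate from Lemma \ref{regular}, since $L_K(E)$ is a locally unital ring (it has local units given by finite sums of distinct vertices), and a regular locally unital ring is both right and left P-injective.

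The real work is the implication $(1)\Rightarrow(3)$, or equivalently its contrapositive: if $E$ contains a cycle, then $L_K(E)$ is neither right nor left P-injective. First I would handle the cleanest case. Suppose $c = e_1\cdots e_n$ is a cycle based at a vertex $v = s(e_1)$. The idea, generalizing the Remark for the single-loop graph, is to produce an element $a \in L_K(E)$ with $l_R r_R(a) \neq Ra$, thereby contradicting Lemma \ref{ecpinj}. The natural candidate is $a = v - c$ (viewed inside the corner $vL_K(E)v$, or directly in $L_K(E)$). One computes that $c^*$ (more precisely $c^* v = c^*$) lies in $l_R r_R(v-c)$: indeed if $(v-c)t = 0$ then $vt = ct$, and iterating gives $vt = c^n t$ for all $n$, from which one deduces $c^* t = c^*(v-c)t + c^* c t = \ldots$ — the precise bookkeeping shows $c^*$ annihilates $r_R(v-c)$. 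On the other hand $c^* \notin R(v-c)$: this is the part requiring care, and I would argue it using a grading or filtration argument. The Leavitt path algebra carries a $\mathbb Z$-grading with $\deg(e)=1$, $\deg(e^*)=-1$, and $v-c$ has components in degrees $0$ and $n$, while $c^*$ is homogeneous of degree $-n$; a degree count on any purported expression $c^* = r(v-c)$ shows it is impossible (one may also use the canonical basis of $L_K(E)$ in reduced form $\sum k_i \alpha_i\beta_i^*$ and track the "lowest" ghost-path component). The left P-injectivity failure is the mirror image, using $a = v - c$ again and the right annihilator/left-ideal dual, or equivalently applying the $*$-involution of $L_K(E)$ which interchanges left and right.

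There is a subtlety I would flag: a cycle in $E$ need not be based at a regular vertex, and the vertex $v$ on the cycle may also emit edges not on the cycle (so-called exits), or the CK-2 relation at $v$ may or may not apply. The argument above via the $\mathbb Z$-grading is robust to all of this, since it only uses the defining relations (1)–(4) and the homogeneity of $v$, $c$, $c^*$; no special hypothesis on $v$ is needed. Alternatively, one can pass to the hereditary saturated subgraph or corner containing the cycle, but invoking the grading directly is cleaner.

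The step I expect to be the main obstacle is precisely showing $c^* \notin R(v-c)$ (and its left-sided analogue) rigorously. Writing a general element of $R(v-c)$ in reduced $\sum k_i\alpha_i\beta_i^*$ form and showing no such combination equals $c^*$ requires either a careful normal-form / rewriting argument in $L_K(E)$ or, as I prefer, exploiting the $\mathbb Z$-grading: since $v-c$ is not homogeneous, multiplying on the left by homogeneous elements of degrees $d$ and $d-n$ and summing can only produce $c^*$ if certain homogeneous components cancel, and one checks the degree $-n$ component of $r\cdot v - r\cdot c$ forces a contradiction with the relations $c^* c = v$ and $c^* e_1 \cdots = \ldots$. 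Once this single computation is in place, Lemma \ref{ecpinj} finishes $(1)\Rightarrow(3)$, and the chain of equivalences is complete.
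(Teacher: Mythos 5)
Your proposal is correct in outline but takes a genuinely different route from the paper on the hard implication. The paper never argues $(1)\Rightarrow(3)$ directly; instead it proves $(1)\Rightarrow(2)$ by combining semiprimeness of $L_K(E)$ (Lemma \ref{semiprime}), the Ara--Goodearl theorem that every one-sided ideal of $L_K(E)$ is projective (Lemma \ref{onesided}, so $L_K(E)$ is a PP ring), and the general ring-theoretic Lemma \ref{von} that a semiprime, right P-injective, right PP ring is regular; the equivalence of $(2)$, $(3)$, $(4)$ is then quoted from the literature. Your route instead proves $(1)\Rightarrow(3)$ by generalizing the paper's single-loop Remark to an arbitrary cycle $c$ based at $v$, exhibiting $a=v-c$ with $l_Rr_R(a)\neq Ra$ and invoking Lemma \ref{ecpinj}. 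This buys self-containedness: you avoid the deep projectivity theorem and the PP-ring machinery entirely, at the price of an explicit graded computation. Both arguments are valid.

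On the step you flag as the main obstacle: it does close, but be aware that the naive ``top homogeneous component'' argument fails when the cycle has an exit, because right multiplication by $c$ is then not injective (e.g.\ $f^*c=0$ for an exit $f$ at $v$). First note the reduction $c^*\in R(v-c)\iff v\in R(v-c)$ (if $c^*=r(v-c)$ then $v=c^*c=rc(v-c)$, using $(v-c)c=c(v-c)$; conversely $c^*=c^*v$). Now suppose $v=r(v-c)$ with $r=rv=\sum_d r_d$ homogeneous decomposition; comparing degree-$d$ components gives $r_d-r_{d-n}c=\delta_{d,0}\,v$. Running this recursion upward along the residue class of $0$ forces $r_0=v$ and then $r_{kn}=c^k$ for all $k\geq 1$, which contradicts the finiteness of the support of $r$ since $c^{*k}c^k=v\neq 0$ shows $c^k\neq 0$. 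The first half of your computation (that $c^*\in l_Rr_R(v-c)$) also works as you indicate: $vt=ct$ gives $vt=c^kvt$ for all $k$, and since left multiplication by $c^k$ is injective on $vR$ (as $c^{*k}c^k=v$) a top-degree comparison forces $vt=0$, whence $c^*t=c^*vt=0$. With these details supplied, your argument is complete and correctly independent of whether $v$ is regular or the cycle has exits.
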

\begin{proof}$(1)\Rightarrow(2)$ Follows from Lemma  \ref{semiprime}, Lemma  \ref{onesided} and Lemma  \ref{von}. $(2) \Rightarrow (1)$ Follows from Lemma \ref{regular}. The equivalence of $(2)$, $(3)$ and $(4)$ can be seen in Theorem 3.4.1 \cite{Abrams}.

\end{proof}

\section*{Acknowledgements}
We would like to express our gratitude to Prof. Kulumani M. Rangaswamy and Prof. Gene Abrams for  fruitful conversations. Thanks are due to Prof. M. B. Rege for his comments and suggestions which significantly improved the exposition.

\end{document}